\newtheorem{theorem}{Theorem}[section]
\theoremstyle{definition}
\newtheorem{example}[theorem]{Example}
\theoremstyle{remark}
\numberwithin{equation}{section}
\begin{document}
\setcounter{page}{1}

\title[Polytopes of Stochastic Tensors]{Polytopes of Stochastic Tensors}

\author[H. Chang, V.E. Paksoy, F. Zhang]{Haixia Chang$^1$, Vehbi E. Paksoy$^2$ and Fuzhen Zhang$^2$$^{*}$}

\address{$^{1}$ School of Statistics and Mathematics, Shanghai Finance University, Shanghai 201209, P.R. China.}
\email{\textcolor[rgb]{0.00,0.00,0.84}{hcychang@163.com}}

\address{$^{2}$ Department of Mathematics, Nova Southeastern University,
3301 College Ave., Fort Lauderdale, FL 33314, USA.}
\email{\textcolor[rgb]{0.00,0.00,0.84}{vp80@nova.edu; zhang@nova.edu}}


\subjclass[2010]{Primary 15B51; Secondary 52B11.}

\keywords{doubly stochastic matrix, extreme point, polytope,
stochastic semi-magic cube, stochastic tensor.}

\date{Received: xxxxxx; Revised: yyyyyy; Accepted: zzzzzz.
\newline \indent $^{*}$ Corresponding author}

\begin{abstract}
Considering $n\times n\times n$ stochastic tensors $(a_{ijk})$
(i.e., nonnegative hypermatrices in which every sum over one index $i$, $j$, or $k$,  is 1),
we study the polytope ($\Omega_{n}$) of all these tensors, the convex set ($L_n$) of
all tensors in $\Omega_{n}$ with some positive diagonals, and the polytope ($\Delta_n$)
generated by the permutation tensors. We show that $L_n$ is almost the same as
$\Omega_{n}$ except for some boundary points. We also present an upper bound
for the number of vertices of $\Omega_{n}$.
\end{abstract} \maketitle

\section{Introduction}

A square matrix is doubly stochastic if its entries are all nonnegative
and each row and column sum is 1.
A celebrated result known as  Birkhoff's theorem about doubly stochastic matrices
(see, e.g., \cite[p.~549]{HJ1.13}) states that
an $n\times n$ matrix is doubly stochastic if and only if it is a convex combination of some $n\times n$ permutation matrices.
Considered as elements in $\Bbb R^{n^2}$, the $n\times n$ doubly stochastic matrices form a polytope ($\omega_n$).
The  Birkhoff's theorem says that the polytope $\omega_n$ is the same as the polytope ($\delta_n$) generated by the permutation matrices.
A traditional proof of this result is by making use of a lemma  which ensures that
every doubly stochastic matrix has a positive diagonal (see, e.g.,  \cite[Lemma 8.7.1, p.~548]{HJ1.13}).
By a diagonal of an $n$-square matrix we mean a set of $n$ entries taken from
different rows and columns. The $n$-square doubly stochastic matrices having a positive diagonal form a polytope ($l_n$) too. Apparently, $\delta_n\subseteq l_n\subseteq \omega_n$.
Birkhoff's theorem asserts that the three polytopes $\omega_n$, $l_n$, and $\delta_n$ coincide.

In this paper, we consider the counterpart of the Birkhoff's theorem for higher dimensions.
A multidimensional array of numerical values is referred to as a {\em tensor} (see, e.g., \cite{KB09}).
It is also known as a hypermatrix \cite{Lim13}.
 Let $A=(a_{ijk})$ be an $n\times n\times n$ tensor (or an $n$-tensor cube). We call $A$ a {\em stochastic tensor} \cite{CLN14}, or
 {\em stochastic semi-magic cube} \cite{Mayathesis03}, or simply {\em stochastic cube} if
all $a_{ijk}\geq 0$, and
\begin{gather}
\sum_{i=1}^n a_{ijk}=1, \quad \forall j, k, \label{eq11}\\
\sum_{j=1}^n a_{ijk}=1, \quad \forall i, k, \label{eq12}\\
 \sum_{k=1}^n a_{ijk}=1, \quad \forall i, j.\label{eq13}
 \end{gather}

An $n$-tensor cube may be interpreted in terms of its slices \cite{KB09}.
By  a {\em slice} of a tensor $A$, we mean
a two-dimensional section of tensor $A$ obtained by fixing any one of the three indices. For
a $3$-tensor cube $A=(a_{ijk})$,  there are 9 slices,
each of which is  a square matrix.
An intersection of any two non-parallel slices is called a {\em line}
(also known as {\em fiber} or {\em tube}).
That is, a line is a one-dimensional section of a tensor; it is obtained by fixing all but one indices.
A {\em diagonal} of an $n\times n\times n$ tensor cube is a collection of
$n^2$ elements such that no two  lie on the same line. A nonnegative tensor is said to have
 a {\em positive diagonal} if all the elements of a diagonal are positive.
We  say
such a tensor has the {\em positive diagonal property}.

The Birkhoff theorem is about the matrices that are 2-way stochastic, while stochastic cubes are
3-way stochastic. Related works on partial or multiple stochasticity
  such as line or face stochasticity can be found in \cite{BrCs75, Csi70,
FiSw85}, 
including the recent ones
\cite{KB09} (on tensor computation),  \cite{Lim13} (a survey chapter on tensors and hypermatrices),
\cite{CLN14} (on extreme points of tensors), and \cite{CQZ13} (a survey on the spectral theory of nonnegative tensors).

Let $\Omega_{n}$ be the set of all $n\times n\times n$ stochastic tensors (i.e., semi-magic cubes).
It is evident that $\Omega_{n}$, regarded as a subset of $\Bbb R^{n^3}$, is  convex and compact
(since it is an intersection of finitely many closed
half-spaces and it is bounded).
 It is not difficult to show that
every permutation tensor (i.e.,  $(0, 1)$ stochastic tensor) is a vertex of $\Omega_{n}$.
Let $\Delta_n$ be the polytope generated (i.e., convex combinations of) by the $n\times n\times n$
permutation tensors and let $L_n$ be the set of all
$n\times n\times n$ stochastic tensors with the positive diagonal property. Obviously,
$$\Delta_n\subseteq L_n\subseteq \Omega_n.$$

If $n=2$, a straightforward computation yields  that $\Delta_n= L_n=\Omega_n.$
For $n\geq 3$, it is known that each of the above inclusion is proper (see Section 3).
Furthermore,  the number of vertices of $\Delta_n$ (i.e., permutation tensors) is equal to the number of Latin squares of order $n$
(\cite[Proposition 2.6]{CLN14}).

We assume $n\geq 2$ throughout the paper.  We show a close relation between
$L_n$ and $\Omega_n$, specially, the closure of $L_n$ is $\Omega_n$.
Moreover,  a  lower bound for the number of the vertices of $\Omega_{n}$ is available
in \cite[p.~34]{Mayathesis03}.
In this paper, we  give an upper bound for the number of vertices of $\Omega_{n}$.

\section{Vectorizing a cube}

For an $m\times n$ matrix $A$ with rows $r_1,  \dots, r_m$ and columns $c_1,  \dots, c_n$, let
$$vec_r(A)=\left (\begin{array}{cccc}
r_1^T\\
\vdots \\
r_m^T
\end{array} \right ), \quad vec_c(A)=\left (\begin{array}{cccc}
c_1\\
\vdots \\
c_n
\end{array} \right ).$$

Vectorizing, or ``vecing" for short, a matrix (respect to rows or columns) is a basic method in solving matrix equations. It also plays an important role in computation.
We present in this section how to ``vec" a cube. This may be useful in tensor computations which is a popular field in these days.

For $n\times n$ doubly stochastic matrices, we have the following fact by a direct verification.
Let $e_n=(1, \dots, 1)\in \Bbb R^n$.
 An $n\times n$ nonnegative matrix $S=(s_{ij})$ is doubly stochastic if and only if $S$ is a nonnegative matrix satisfying
$$(I_n\otimes e_n)vec_r(S)=e_n^T \quad \mbox{and} \quad (I_n\otimes e_n)vec_c(S)=e_n^T.$$

When a tensor cube is interpreted in terms of slices \cite[p.~458]{KB09},
we see that each slice of a stochastic tensor is a doubly stochastic square matrix.
An intersection of any two nonparallel slices is a {\em line} (fiber).
An $n\times n\times n$ tensor cube has $3n^2$ lines. Considering each line as a column vector of $n$ components, we stack all the lines
in the order of $i$, $j$, and $k$ directions (or modes), respectively,
to make a column vector of $3n^3$ components. We call such a vector the ``line vec" of the cube and denote it by
$vec_{\ell}(\cdot).$ Note that when vecing a 3rd order-$n$ dimensional tensor, every entry of the tensor is used 3 times.

For two cubes $A$ and $B$ of the same size and for any scalar $\alpha$, we have
$$vec_{\ell}(\alpha A+B)=\alpha vec_{\ell}(A) + vec_{\ell}(B)$$
and
$$\langle A, B\rangle =\frac13 \langle vec_{\ell}(A),  vec_{\ell}(B)\rangle,$$
where the left inner product is for tensors while the right one is for vectors.

What follows is a characterization of a stochastic tensor through ``vecing".

\begin{theorem} Let  $e_k $ be the all 1 row vector of $k$ components, where $k$ is a positive integer. An $n\times n\times n$ nonnegative cube $C=(c_{ijk})$ is stochastic if and only if
$$(I_{m}\otimes e_n) vec_{\ell} (C)=e_{m}^T, \quad \mbox{where $m=3n^2$}.$$
\end{theorem}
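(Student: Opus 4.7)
The strategy is to unpack the Kronecker product $(I_m\otimes e_n)$ and verify that applying it to $vec_{\ell}(C)$ yields a column vector whose $m=3n^2$ entries are precisely the $m$ line-sums of $C$. Once this identification is made, equating the result with $e_m^T$ is a direct restatement of the three stochastic conditions (\ref{eq11})--(\ref{eq13}).

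First I would unwind the construction of $vec_{\ell}(C)$. By definition it lies in $\Bbb R^{3n^3}$ and is formed from $3n^2$ consecutive blocks, each a column of length $n$. Grouping the blocks by mode, the first $n^2$ are the lines obtained by fixing $(j,k)$ and letting $i$ vary, the next $n^2$ fix $(i,k)$ and let $j$ vary, and the last $n^2$ fix $(i,j)$ and let $k$ vary. In particular every entry $c_{ijk}$ of $C$ appears exactly three times in $vec_{\ell}(C)$, once in each of the three groups; this is consistent with the factor $\tfrac13$ in the inner product identity recorded just before the theorem.

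Next I would describe the left multiplier. As an $m\times mn$ matrix with $m=3n^2$, the Kronecker product $I_m\otimes e_n$ is block-diagonal with each diagonal block equal to the single row $e_n=(1,\dots,1)$. Therefore for any $v\in\Bbb R^{mn}$ the $\ell$-th coordinate of $(I_m\otimes e_n)v$ is simply the sum of the $n$ entries lying in the $\ell$-th block of $v$. Applying this to $v=vec_{\ell}(C)$ gives a column vector in $\Bbb R^m$ whose $\ell$-th coordinate equals the sum along the $\ell$-th line of $C$.

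Finally, I would combine the two observations: the equation $(I_m\otimes e_n)\,vec_{\ell}(C)=e_m^T$ holds if and only if every line sum of $C$ equals $1$, and, after partitioning the lines by mode as above, this breaks into the three families (\ref{eq11}), (\ref{eq12}), (\ref{eq13}). Together with the nonnegativity assumed on $C$, this is exactly the definition of a stochastic tensor, so both directions of the ``iff'' follow simultaneously. The proof is essentially a bookkeeping exercise; the only point that requires genuine care is to keep the order of the stacked lines in $vec_{\ell}$ aligned with the block structure of $I_m\otimes e_n$, and to remember that $e_m^T$ denotes the all-ones \emph{column} of length $m$. No deeper combinatorial or analytic ingredient is needed.
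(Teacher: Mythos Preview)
Your proposal is correct and is precisely the ``direct verification'' the paper invokes: you unpack $I_m\otimes e_n$ as a block-diagonal summation operator and observe that its action on $vec_{\ell}(C)$ produces the $3n^2$ line sums, so equality with $e_m^T$ is equivalent to (\ref{eq11})--(\ref{eq13}). The paper's proof consists of the single sentence ``By a direct verification,'' so your write-up simply supplies the routine details the authors omitted.
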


\begin{proof}
 By a direct verification.
 \end{proof}

\section{The convex set $L_n$}

It is known that if $A=(a_{ij})$ is an $n\times n$ doubly stochastic matrix, then $A$ has a positive diagonal; that is,
there exist $n$ positive entries of $A$
such that no two of these entries are on the same row and same column (the positive diagonal property).  Does the polytope of stochastic tensor cubes have the positive
diagonal property?
Let $A=(a_{ijk})$ be an $n\times n\times n$ stochastic cube.
Is it true that there always exist $n^2$ positive entries of $A$ such that no two of these entries lie on the same line? 
In short, does a stochastic tensor have the positive diagonal property?

It is easy to see that every permutation tensor is an extreme point of
$\Omega_n$. Apparently,  the set of nonnegative tensors of the same size forms a cone; that is,
if $A$ and $B$ (of the same size) have the positive diagonal property,  then so are $aA$ and $A+bB$ for any positive scalars $a, b$.
Obviously, every permutation tensor possesses the   positive diagonal property, so does any  convex combination of finitely many permutation tensors.
However,  some stochastic tensor cubes fail to have the positive diagonal property as the following example shows.

Note that $\Delta_{n}$ and $\Omega_{n}$ are convex and compact (in $\Bbb R^{n^3}$),
while $L_{n}$ is convex but not compact. In what follows, Example  \ref{ex1}  shows that
a stochastic tensor cube need not have a positive diagonal (unlike the case of doubly stochastic matrices); Example \ref{ex2}  shows
a stochastic tensor cube with the positive diagonal property need not be generated by permutation tensors.

\begin{example}\label{ex1}
Let $E$ be the $3\times 3 \times 3$ stochastic tensor cube:
\begin{figure}[h]
\begin{center}
\includegraphics[width=2in,totalheight=1.8in]{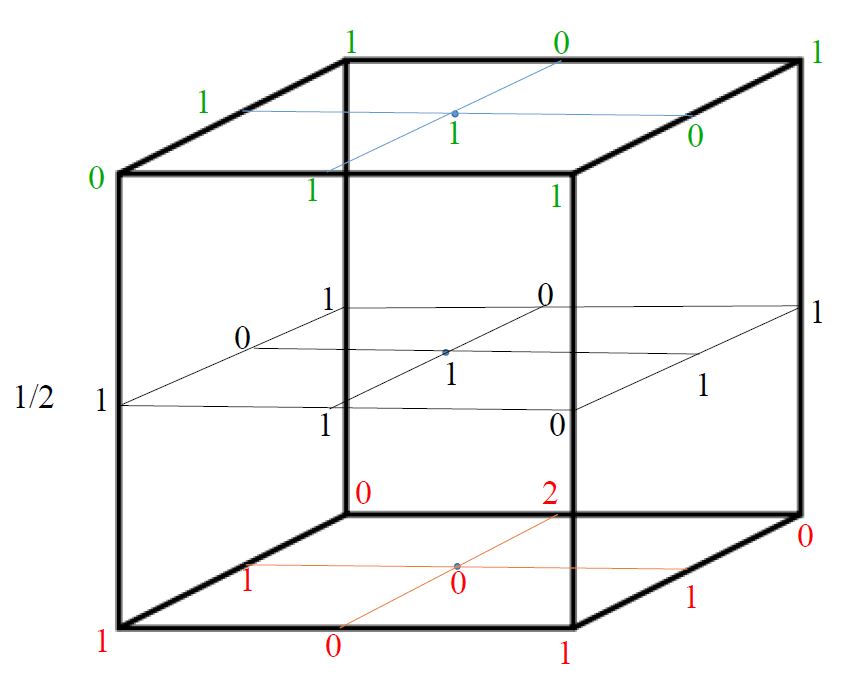}
\end{center}
\end{figure}
\linebreak
which can be ``flattened" to be a $3\times 9$ matrix
$$\frac12 \left  [
\begin{array}{ccccccccccc}
0 & 1 & 1 & \vdots  & 1 & 1 & 0 & \vdots &  1 & 0 & 1 \\
1 & 1 & 0 & \vdots  & 0 & 1 & 1 & \vdots &  1 & 0 & 1 \\
1 & 0&  1 & \vdots  & 1 & 0 & 1 & \vdots &  0 & 2 & 0
\end{array} \right ].$$

One may verify (by starting with the entry 2 at the position (3, 2, 3)) that $E$ has no positive diagonal and
$E$ is not a convex combination of the permutation tensors.
So $L_3\subset \Omega_3$. (In fact, $E$ is an extreme point of $\Omega_3$; see e.g.,
\cite{Mayathesis03}.)
\end{example}

\begin{example}\label{ex2}
Taking the stochastic tensor cube $F$ with the flattened matrix
$$\left  [
\begin{array}{ccccccccccc}
0 & 0.6 & 0.4 &     \vdots  & 1 & 0 & 0 &     \vdots &  0 & 0.4 & 0.6 \\
0.6& 0 & 0.4 & \vdots    & 0 & 0.4 & 0.6 & \vdots &  0.4 & 0.6 & 0 \\
0.4& 0.4&  0.2 & \vdots  & 0 & 0.6 & 0.4 & \vdots &  0.6 & 0 & 0.4
\end{array} \right ],$$
we see that $F\in L_3$ by choosing the  positive elements $\times $  as follows:
$$\left  [
\begin{array}{ccccccccccc}
 &   \times  &  &     \vdots  & \times  &  &   &     \vdots &    &  & \times \\
 \times &    &   & \vdots  &   &  & \times &          \vdots &  &  \times &   \\
  &  & \times & \vdots  &  & \times  &   & \vdots & \times   &  &
\end{array} \right ].$$

On the other hand, if $F$ is written as $x_1P_1+\cdots x_kP_k$, where
all $x_i$ are positive with sum 1, and each $P_i$ is a permutation tensor of the same size, then each
$P_i$ has 0 as its entry at position $(j_1, j_2, j_3)$ where $F_{j_1j_2j_3}=0$, that is, every $P_i$  takes the form
$$P_i= \left  [
\begin{array}{ccccccccccc}
0 & \ast & \ast &     \vdots  & 1& 0 & 0 &     \vdots &  0 & \ast & \ast \\
\ast& 0 & \ast & \vdots  & 0& {\star} & \ast & \vdots &  \ast & \ast & 0 \\
\ast & \ast&  \ast & \vdots  & 0& \ast & \ast & \vdots &  \ast& 0 & \ast
\end{array} \right ].$$
There exists only one such permutation tensor with 0 in the
 $(2, 2)$ position ($\star$) in the second slice.
Therefore,  $F\not\in \Delta_3$. So the inclusions $\Delta_3\subset L_3\subset \Omega_3$ are proper.
\end{example}

Next we show that
the closure of $L_{n}$ is $\Omega_{n}$ and also that
every interior point of $\Omega_n$ belongs to $L_n$. So $L_{n}$ is ``close" to $\Omega_{n}$
except some points of the boundary.

\begin{theorem} The closure of the set of all $n\times n\times n$ stochastic tensors having a positive diagonal
is the set of all $n\times n\times n$ stochastic tensors. In symbol,
$$\mbox{{\em cl}$(L_{n})$}=\Omega_{n}.$$
Moreover, every interior point (tensor) of $\Omega_n$ has a positive diagonal.
Consequently, a stochastic tensor that does not have the positive diagonal property belongs to the boundary $\partial \Omega_{n}$ of the polytope $\Omega_n$.
\end{theorem}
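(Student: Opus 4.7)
The plan is to establish the second assertion of the theorem first---that every interior point of $\Omega_n$ has a positive diagonal---and then to deduce the closure equality as a short perturbation consequence; the final sentence about boundary points is simply the contrapositive. The strategy rests on one combinatorial observation: diagonals of an $n\times n\times n$ cube are in bijection with Latin squares of order $n$, so a diagonal exists for every $n\ge 2$, and whenever every entry of a stochastic tensor is strictly positive, it automatically has a positive diagonal.

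First I would handle the interior-point claim. By ``interior point of $\Omega_n$'' I read the relative interior with respect to the affine hull cut out by the $3n^2$ stochastic constraints \eqref{eq11}--\eqref{eq13}; this is precisely the set of $A=(a_{ijk})\in\Omega_n$ with all $a_{ijk}>0$. To exhibit a positive diagonal I would pick the cyclic Latin square $L(j,k)=((j+k)\bmod n)+1$ and form
$$D_L=\{(L(j,k),\,j,\,k):1\le j,k\le n\}.$$
This set has $n^2$ positions. No two share a $(j,k)$-pair by construction, while the Latin-square property (each symbol appearing once in every row and column of $L$) forbids any two from sharing an $(i,k)$- or $(i,j)$-pair. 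Hence $D_L$ is a diagonal, and because every entry of $A$ is strictly positive so are the $n^2$ entries indexed by $D_L$, giving $A\in L_n$.

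Next I would prove $\mathrm{cl}(L_n)=\Omega_n$. The inclusion $\mathrm{cl}(L_n)\subseteq\Omega_n$ is immediate since $\Omega_n$ is closed in $\Bbb R^{n^3}$. For the reverse, let $J$ denote the uniform stochastic tensor with $J_{ijk}=1/n$ for all $i,j,k$; a direct check confirms $J\in\Omega_n$. Given any $A\in\Omega_n$ and $t\in(0,1]$, set $A_t=(1-t)A+tJ$. Convexity of $\Omega_n$ yields $A_t\in\Omega_n$, and every entry of $A_t$ is bounded below by $t/n>0$, so by the previous paragraph $A_t\in L_n$. Letting $t\to 0^+$ produces a sequence in $L_n$ converging to $A$, so $A\in\mathrm{cl}(L_n)$. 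The final sentence of the theorem is then the contrapositive of the second one.

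There is no serious obstacle here; the only conceptual step is the identification of diagonals with Latin squares, after which existence is automatic and the whole argument collapses to convex-combining $A$ with the uniform tensor $J$. The only point I would double-check is the interpretation of ``interior'': the ambient interior of $\Omega_n$ in $\Bbb R^{n^3}$ is empty, so the statement must refer to the relative interior, and one needs to confirm that strict positivity of all entries genuinely characterizes this relative interior---which it does, since the $3n^2$ equalities are already imposed, leaving only the inequalities $a_{ijk}\ge 0$ to be made strict.
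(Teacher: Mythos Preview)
Your argument is correct, and it differs from the paper's in both halves. For the closure equality, the paper perturbs an arbitrary $Q\in\Omega_n$ toward a permutation tensor $P$: since $P$ has value $1$ at each of its diagonal positions, $tP+(1-t)Q$ is at least $t>0$ there, so the convex combination inherits $P$'s diagonal and lies in $L_n$; letting $t\to 0$ finishes. You instead perturb toward the uniform tensor $J$, forcing \emph{every} entry above $t/n$ and then invoking your Latin-square diagonal. For the interior-point claim, the paper argues geometrically: given an interior point $B$, it takes a small ball in $\Omega_n$ around $B$, draws the line from a permutation tensor $A$ through $B$, and lets $C'$ be the antipode (on the sphere) of the point where this line meets the sphere; then $B=sA+(1-s)C'$ with $C'\in\Omega_n$ and $0<s<1$, so $B\in L_n$ by the same mixing observation. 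You bypass this by identifying the relative interior with the set of strictly positive tensors and exhibiting a diagonal via the cyclic Latin square. Your route is more explicit and self-contained (it actually names a diagonal), while the paper's avoids characterizing the relative interior at the cost of a slightly more delicate ball-and-antipode construction; both ultimately rest on the existence of a Latin square of order $n$. Your remark that ``interior'' must mean relative interior is well taken---the paper is silent on this, though its ball argument only makes sense under that reading.
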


\begin{proof}
 Since $L_n\subseteq \Omega_n$, we have {\rm cl}$(L_{n})\subseteq \Omega_{n}.$
For the other way around, observe that every permutation tensor is in $L_{n}$. If $P, Q\in \Omega_n$,
where  $P$  is a permutation tensor, then $tP+(1-t)Q$ belongs to  $L_{n}$ for any $0<t\leq 1$.
Thus, for any $Q\in \Omega_n$, if we set $t=\frac{1}{m}$, we get
$\lim_{m\rightarrow \infty} \big (\frac1m P+(1-\frac1m )Q\big )=Q$.
This says that $\Omega_n\subseteq \mbox{{\rm cl}($L_{n})$}$.  It follows that
$\Omega_n=\mbox{{\rm cl}($L_{n})$}$.

We now show that every interior point of $\Omega_{n}$ lies in $L_n$.  Let $B$ be an interior point
of $\Omega_{n}$. Then there is an open ball, denoted by ${\mathcal{B}}(B)$, centered at $B$, inside $\Omega_{n}$. Take a permutation tensor
$A$, say, in $\Delta_{n}$. Then $tA+(1-t)B\in L_{n}$ for any $0<t\leq 1$. Suppose that
the intersection point of the sphere  $\partial {\rm cl}({\mathcal{B}}(B))$ with the line
$tA+(1-t)B$ is at $C$. Let $C'$ be the corresponding point of $C$ under the antipodal
mapping with respect to the center $B$. Then $B$ is between $A$ and $C'$, so B can be written as
$sA+(1-s)C'$ for some $0<s<1$.  By the above discussion,
$B=sA+(1-s)C'$ is in $L_n$. That is, every interior point of $\Omega_n$  lies in $L_n$.
\end{proof}

\section{An upper bound for the number of vertices}

The Birkhoff polytope, i.e., the set of doubly stochastic matrices, is the convex hull of its extreme points - the permutation matrices.
The Krein-Milman theorem (see, e.g., \cite[p. 96]{Zie95})
states that  every compact convex polytope is the convex hull of its vertices.
A fundamental question of polytope theory is that of an upper (or lower) bound for the number of vertices (or even faces). 
Determining the number of vertices (and faces) of a given polytope is a computationally difficult problem in general (see, e.g., the texts on polytopes \cite{Bro83} and \cite{Zie95}).

M. Ahmed et al \cite[p.~34]{Mayathesis03}
 gave a lower bound $(n!)^{2n}/n^{n^2}$ for the number of vertices (extreme points) of $\Omega_n$ through an algebraic combinatorial approach.
We present an upper bound and our approach is analytic.

\begin{theorem} Let $v(\Omega_n)$ be the number of vertices of the polytope $\Omega_n$.
Then
$$v(\Omega_n)\leq \frac{1}{n^3} \cdot {p(n)\choose n^3-1}, \quad \mbox{where $p(n)=n^3+6n^2-6n+2$.}$$
\end{theorem}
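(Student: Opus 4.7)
The plan is to present $\Omega_n$ in an explicit polyhedral H-representation and then apply a counting bound on the vertices of a polytope defined by a given number of half-spaces.  Using Theorem~2.1, I would first write
\[
\Omega_n=\{x\in\mathbb{R}^{n^3}:x\ge 0,\ Ax=b\},
\]
where $A$ is the $3n^2\times n^3$ line-sum matrix and $b=e_{3n^2}^T$.  A direct rank computation shows that $\operatorname{rank}(A)=r:=3n^2-3n+1$: the $3n^2$ line-sum equations carry exactly $3n-1$ independent linear dependencies coming from the cross-direction slice-sum identities (for each pair of directions and each slice index, summing in two different ways yields the same slice total).  Consequently $\dim(\Omega_n)=n^3-r=(n-1)^3$.

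Next, I would recast $\Omega_n$ as a polytope in $\mathbb{R}^{n^3}$ defined entirely by inequalities: each of the $r$ independent equalities is split into a pair of opposing inequalities.  The total number of defining half-spaces then becomes
\[
p(n)=n^3+2r=n^3+6n^2-6n+2,
\]
which is precisely the quantity appearing in the theorem.  This reformulation is the structural heart of the plan: once the correct count of independent equalities is settled, the appearance of $p(n)$ in the final bound is simply bookkeeping.

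Finally, I would invoke a vertex-counting bound of the shape ``an $N$-variable polytope defined by $m$ linear inequalities has at most $\tfrac{1}{N}\binom{m}{N-1}$ vertices,'' applied here with $N=n^3$ and $m=p(n)$.  The factor $\tfrac{1}{n^3}=\tfrac{1}{N}$ points to a double-counting argument in which each vertex is enumerated $N$ times by size-$(N-1)$ subsets of the defining inequalities whose boundary hyperplanes meet in a line through it.  The main obstacle will be to justify this over-counting factor exactly, because $\Omega_n$ is far from simple---the $2r$ equality-pair inequalities are tight at every feasible point, so the naive count must be adjusted.  Since the authors describe the approach as ``analytic,'' I expect the rigorous justification to come from a perturbative/continuity argument: perturb the affine data so that the polytope becomes simple, count the vertices of the perturbed polytope by an elementary pigeonhole over which $N$ of the $p(n)$ inequalities are active, and pass to the limit to recover the bound for $\Omega_n$ itself.
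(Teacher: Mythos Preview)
Your setup is exactly what the paper does: compute $\operatorname{rank}(A)=r=3n^2-3n+1$, split the $r$ independent equalities into opposite half-space pairs, and arrive at $p(n)=n^3+2r$ defining half-spaces in $\mathbb{R}^{n^3}$.  So the structural heart of your plan matches the paper's proof precisely.

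Where you diverge is in how the generic bound $v\le \tfrac{1}{N}\binom{p}{N-1}$ is justified.  The paper does \emph{not} perturb to a simple polytope and pass to a limit.  Instead it runs an induction on the ambient dimension.  The key observation---which you already have in your ``double-counting'' remark---is that every vertex of a polytope in $\mathbb{R}^{N}$ must make at least $N$ of the $p$ defining inequalities tight; hence summing the vertex counts of the $p$ supporting hyperplanes overcounts each vertex by a factor $\ge N$.  Each hyperplane section is itself a polytope in one lower ambient dimension cut out by at most $p-1$ of the remaining half-spaces, so the inductive hypothesis $V_{N-1}\le \tfrac{1}{N-1}\binom{p-1}{N-2}$ yields
\[
v(\Omega_n)\;\le\;\frac{1}{N}\sum_{t=1}^{p} v(H_t)\;\le\;\frac{p}{N}\cdot\frac{1}{N-1}\binom{p-1}{N-2}\;=\;\frac{1}{N}\binom{p}{N-1}.
\]
Thus your over-counting intuition is correct, but the paper packages it as a dimension induction rather than a perturbation argument.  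In particular the non-simplicity of $\Omega_n$ that worries you is never an obstacle: the inequality ``each vertex lies on at least $N$ hyperplanes'' holds for degenerate vertices as well, and no limiting procedure is invoked.  Your proposed perturbation route is plausible but would require more care (a naive simple-polytope edge count gives a factor $\tfrac{2}{N}$ rather than $\tfrac{1}{N}$), whereas the induction gets the stated constant directly.
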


\proof  Considering $\Omega_n$  defined by (\ref{eq11})--(\ref{eq13}), we want to know the numbers of independent equations  (lines) that describe $\Omega_n$. For each horizontal slice (an $n\times n$ doubly stochastic matrix),  $2n-1$ independent lines are needed and sufficient. So there are $n(2n-1)$ independent horizontal lines from $n$ horizontal slices. Now consider the vertical lines, there are $n^2$ of vertical lines. However,  $(2n-1)$ of them, say,  on the most right and back, have been determined by the horizontal lines (as each line sum is 1). Thus, $n^2-(2n-1)=(n-1)^2$ independent vertical lines are needed. So there are $n(2n-1)+(n-1)^2=3n^2-3n+1$ independent lines in total to define the tensor cube.
It follows that  we  can view $\Omega_n$  defined by (\ref{eq11})-(\ref{eq13})  as the set of all vectors
$x=(x_{ijk})\in \Bbb R^{n^3}$  satisfying 
\begin{gather}
\sum_{i=1}^nx_{ijk}=1, \quad 1\leq j\leq n,\; 1\leq k\leq n, \label{eq1} \\
\sum_{j=1}^nx_{ijk}=1, \quad 1\leq i\leq n,\; 1\leq k\leq n-1, \label{eq2} \\
\sum_{k=1}^nx_{ijk}=1, \quad 1\leq i\leq n-1,\; 1\leq j\leq n-1, \label{eq3} \\
\ x_{ijk}\geq 0, \quad 1\leq i, j, k\leq n. \label{eq4}
\end{gather}\smallskip

We may rewrite (\ref{eq1})--(\ref{eq3}) and (\ref{eq4}) respectively as
$$Ax=u, \quad Bx\geq 0,$$
where $A$ is a $(3n^2-3n+1)\times n^3$ $(0, 1)$ matrix, $u$ is the all 1 column vector in $\Bbb R^{3n^2-3n+1}$,  and $B$ is an $n^3\times n^3$ $(0, 1)$ matrix.
Let $m=n^3$.

A subset  of $\Bbb R^m$ is a convex hull of a finite set if and only if it is a bounded intersection of closed half-spaces \cite[p. 29]{Zie95}.
The polytope  $\Omega_n$ is generated by the $p=n^3+6n^2-6n+2$ half-spaces defined by the
linear inequalities $Ax\geq u$, $Ax\leq u$, and $Bx\geq 0$, $x\in \Bbb R^m$. Let $e$ be a vertex of $\Omega_n$.
We claim that at least $m$ equalities $he=1$ or 0 hold, where $h$ is a row of $A$ or row of $B$, that is,
$Ce=w$, where $C$ is a $k\times n^3$ ($k\geq m$)  matrix consisting some rows of $A$ and some rows of $B$, and
$w$ is a $(0, 1)$ column vector.
 If, otherwise, $k$ equalities hold for $k<m$,
let $K=\{x\in \Bbb R^m \mid Cx=w\}$. $K$ is an affine space and $e\in K$. Since $C$ is a $k\times m$ matrix,
the affine space $K$ has dimension at least $m-k \geq 1$. Let
$O=\{x\in \Bbb R^m \mid B'x>0\} \cap K$, where $B'$ is a submatrix of $B$ for which $B'e>0$.
$O$ is open in $K$. Since $e\in O$, $e$ is an interior point of $O$, thus it cannot be an extreme
point of $\Omega_n$. We arrive at that every extreme point $e$ lies on at least $m$ supporting hyperplanes $h(x):=hx=w$ in (\ref{eq1})--(\ref{eq4}) that define $\Omega_n$.

To show the upper bound, we use induction on $n$ by reducing the problem to
a polytope (a supporting hyperplane of $\Omega_n$) of lower dimensions.
Let $V_m$ be the maximum value of the vertices of polytopes formed by any $p$ supporting hyperplanes in $\Bbb R^m$. We show
$$V_m\leq   \frac{1}{m} {p \choose m-1} = \frac{1}{n^3} {n^3+6n^2-6n+2\choose n^3-1}$$

For $m=8$, i.e. $n=2$, it is easy to check as $\Omega_2$ has only two vertices. Assume that
 the upper bound inequality holds for the polytopes in the spaces $\Bbb R^k$, $k<m$.
 $\Omega_n$ is formed by   $p$  supporting hyperplanes
  $H_t=\{x \mid h_t(x)=u\}$, $t=1, \dots, p$.
Since $H_t$ is a face of $\Omega_n$,
the vertices of $\Omega_n$ lying in $H_t$ are the vertices of $H_t$. As $H_t$ has smaller dimension than $\Omega_n$ \cite[p. 32]{Bro83} and it is formed
by at most $p-1$ hyperplanes, by the induction hypothesis,
$H_t$ has at most $V_{m-1}$ vertices, each of which lies in at least $m$ hyperplanes. We arrive at
\begin{eqnarray*}
{}\hspace{1.4in} 
v(\Omega_n) & \leq & \frac{1}{m} \sum_{t=1}^p v (H_t)\\
& \leq & \frac{p}{m}\cdot V_{m-1}\\
& =& \frac{p}{m}\cdot  \frac{1}{m-1}\cdot  {{p-1} \choose {m-2}}\\
& = & \frac{1}{m} {p \choose m-1}\\
& = & \frac{1}{n^3} {n^3+6n^2-6n+2\choose n^3-1}.
{}\hspace{1.4in}\qed 
\end{eqnarray*}
\\
{\bf Acknowledgement.}
 The work of Haixia Chang was done during the academic year 2014-2015 when she was a Visiting Professor at Nova Southeastern University;
her work was partially supported by National Natural Science Foundation of China (No. 11501363). Fuzhen Zhang is thankful to
Richard Stanley, Zejun Huang, and Rajesh Pereira for drawing his attention to a number of references.
We would also like to express our thanks to the referee and Zhongshan Li for some corrections and discussions.
The project was partially supported by National Natural Science Foundation of China (No.~11571220).



\end{document}